\documentclass[USenglish]{article}	

\usepackage[utf8]{inputenc}				%(only for the pdftex engine)
\usepackage[big,online]{dgruyter}	%values: small,big | online,print,work
\usepackage{lmodern} 
\usepackage{microtype}
\usepackage[numbers,square,sort&compress]{natbib}

\newtheorem{theorem}{Theorem}
\newtheorem{lemma}{Lemma}

\usepackage{amsmath}
\numberwithin{equation}{section}

\usepackage{bm}
\usepackage{enumerate}
\graphicspath{{figs}}

\begin{document}

%%%--------------------------------------------%%%
\articletype{Research Article}
\received{June 27, 2026}
\journalname{arXiv}
\journalyear{2026}
\startpage{1}
%%%--------------------------------------------%%%

\title{Splitting schemes for problems with memory}
\runningtitle{Splitting schemes for problems with memory}
%\subtitle{Insert subtitle if needed}

\author*[1]{P.N.~Vabishchevich}
\runningauthor{P.N.~Vabishchevich}
\affil[1]{\protect\raggedright 
Lomonosov Moscow State University, 1, Leninskie Gory,  Moscow, Russia; 
North-Caucasus Federal University, 1, Pushkin Street, Stavropol,  Russia, e-mail: vab@cs.msu.ru}
	
%\communicated{...}
\dedication{the memory of a remarkable scientist and friend, Professor Peter Minev.}
	
\abstract{
The paper considers the Cauchy problem for a first-order integro-differential equation with memory in a finite-dimensional Hilbert space. The main computational difficulty of such problems is the need to store and process the solution at all previous time levels. To overcome this difficulty, an approach is used that approximates the memory kernel by a sum of exponentials, which reduces the original nonlocal problem to a local one --- a system of weakly coupled evolution equations with additional ordinary differential equations for auxiliary functions. The problem is formulated in vector form on the direct sum of Hilbert spaces. Unconditional stability of two-level operator-difference schemes with weights is proved under standard restrictions. Splitting schemes are proposed and investigated by separating the local and integral operators of the problem. Possibilities for constructing similar schemes for other nonlocal problems, in particular for the equation with memory of the time derivative of the solution, are noted.
}

\keywords{Integro-differential equations, first-order evolutionary equations, splitting scheme, stability.}

\maketitle
\section{Introduction}\label{sec:1}

The classical approach to modeling non-stationary processes is based on local mathematical models. The basic ones are parabolic equations describing dissipation and hyperbolic equations intended for wave processes \cite{LionsBook,evans2010partial}. However, they do not take into account effects that depend on the history of the system.
Modern nonlocal dynamic models are built on the basis of integro-differential equations linking past and current states of the system \cite{GripenbergBook1990,pruss2013evolutionary}. Such models allow describing both dissipative and oscillatory processes, combining features of parabolic and hyperbolic systems.

When solving boundary value problems for equations with memory, traditional finite-element or finite-volume approximations in space are used \cite{KnabnerAngermann2003,QuarteroniValli}. The result is a Cauchy problem for evolutionary operator equations with memory in finite-dimensional Hilbert spaces. The choice of time approximation, which determines accuracy and computational costs, is of key importance. For the numerical solution of first-order evolution equations with memory, quadrature formulas for the integral term \cite{ChenBook1998} and two-level difference schemes for the time derivative \cite{mclean1993numerical,mclean1996discretization} are typically used. A distinctive feature of such algorithms is the need to store and process the solution at all previous time instants, which leads to a significant increase in computational costs.

An effective way to overcome computational difficulties is to replace nonlocal models with their local analogues, which dramatically reduces memory and computation requirements. Such a transition to simpler problems is achieved by selecting special approximations of the difference kernel appearing in the integral terms \cite{linz1985analytical}. In many cases, the most convenient approximation is the sum of exponential functions. As a result, the original integro-differential equations are reduced to a system of local weakly coupled evolution equations, which greatly simplifies calculations.
Issues of justification of local computational algorithms for the approximate solution of the Cauchy problem for integro-differential equations with memory (including approximation of kernels, properties of solutions of the corresponding local differential problems) have been considered in many works.
In \cite{vabMemory}, the approximate solution of the Cauchy problem for a first-order integro-differential equation with memory in the solution (in a real finite-dimensional Hilbert space) was studied when approximating the kernel by a sum of exponentials. Problems with memory of the time derivative of the solution in Hilbert spaces are discussed in \cite{vabishchevich2023approximate}. In \cite{vabishchevich2025operator}, stable operator-difference schemes for a second-order Volterra integro-differential equation are constructed.

The computational implementation of unconditionally stable implicit schemes for the approximate solution of non-stationary problems can be facilitated by using splitting schemes \cite{Marchuk1990,VabishchevichAdditive}. In this case, the transition to a new time level is ensured by solving subproblems based on an additive representation of the problem operator. In particular, when approximately solving initial-boundary value problems for multidimensional partial differential equations, the transition to a chain of simpler problems allows constructing economical difference schemes --- splitting with respect to spatial variables. In some cases, we can separate subproblems of different nature --- splitting by physical processes. When targeting parallel architecture computers, regionally additive schemes (domain decomposition schemes) are used. Additive schemes for vector problems, as well as for systems of non-stationary coupled equations, can be distinguished as a separate class.

When solving the Cauchy problem for integro-differential equations, we can separate two operators.
The main operator of the problem is associated with modeling processes without taking into account memory effects. An additional operator is included in the integral term of the evolution equation and describes memory effects.
When transforming the nonlocal problem into a local one by approximating the memory kernel by a sum of exponentials and using implicit difference schemes for the coupled system of equations at the new time level, we have a problem that includes both the main and additional operators. We would like to construct unconditionally stable additive-operator difference schemes that simplify the problem at the new time level.
The paper proposes and investigates splitting schemes for the decomposition of the main operator of the problem. It is noted that the possibilities for achieving the goal of simplifying computations when decomposing the additional operator of the problem are not very large.

The structure of the paper is as follows.
Section 2 formalizes the Cauchy problem without memory and with memory, introduces additive representations of operators and describes basic splitting schemes. Section 3 transforms the nonlocal problem into a local one by approximating the kernel by a sum of exponentials and proves the stability of the extended system. Section 4 is devoted to the study of stability of two-level schemes for the vector problem and the construction of splitting schemes. Section 5 extends the approach to the equation with memory of the derivative of the solution. Section 6 summarizes the results of the work.

\section{Problem formulation}\label{sec:2}

The goal is to construct additive-operator schemes (decomposition schemes, splitting schemes) for first-order evolution equations taking into account memory effects.
The key feature of such models is that the equation includes integral (nonlocal) terms.
In our consideration, we focus on standard splitting schemes for the approximate solution of the Cauchy problem for evolution equations.

\subsection{Problem without memory effects} 

We consider the Cauchy problem for a first-order evolution equation:
\begin{equation}\label{2.1}
 \frac{d u}{d t} + A u = f(t),
 \quad 0 < t \leq T, 
\end{equation} 
\begin{equation}\label{2.2}
 u(0)= u^0 .
\end{equation}
We seek a solution $u(t)$ of equation \eqref{2.1} for $0 < t \leq T$ from a finite-dimensional Hilbert space $H$ with the given initial condition \eqref{2.2}.
Under fairly general conditions, the constant (independent of $t$) operator $A$ in (\ref{2.1}) is assumed to be non-self-adjoint and nonnegative.
For its additive decomposition we have
\begin{equation}\label{2.3}
 A = \sum_{\alpha=1}^{p} A_\alpha ,
 \quad A_\alpha \ge 0,
 \quad \alpha = 1,2, \ldots, p .
\end{equation} 

In $H$, the inner product for $u, v \in H$ is $(u,v)$, and the norm is $\|u\| = (u,u)^{1/2}$.
For a self-adjoint and positive operator $D$, we define the Hilbert space
$H_D$ with the inner product and norm $(u,v)_D = (D u,v), \ \|u\|_D = (u,v)_D^{1/2}$.  
For the solution of problem (\ref{2.1})--(\ref{2.3}) we have a simple a priori estimate in $H_D$ for $D = I$:
\begin{equation}\label{2.4}
 \|u(t)\| \leq \|u^0\| + \int_{0}^{t} \|f(s)\| \, d s,
 \quad 0 < t \leq T .
\end{equation} 
Such estimates of stability of the solution with respect to initial data and right-hand side are inherited (see, e.g., \cite{Samarskii1989,SamarskiiMatusVabischevich2002}) when using two-level implicit time approximations.

We introduce a uniform, for simplicity, time grid with step $\tau$ and let $y^n = y(t^n), \ t^n = n \tau$,
$n = 0,1, \ldots, N, \ N\tau = T$.
For the numerical solution of problem (\ref{2.1}), (\ref{2.2}) we use a two-level scheme with weights ($\sigma = \mathrm{const}$):
\begin{equation}\label{2.5}
 \frac{y^{n+1} - y^{n}}{\tau } + A (\sigma y^{n+1} + (1-\sigma) y^{n}) = f^{n+\sigma} , 
 \quad n = 0,1, \ldots, N-1 , 
\end{equation} 
\begin{equation}\label{2.6}
 y^0 = u^0 .
\end{equation} 
using the notation
\[
t^{n+\sigma} = \sigma t^{n+1} + (1-\sigma) t^n,
\quad f^{n+\sigma} = \sigma f^{n+1} + (1-\sigma) f^n .
\]
The difference scheme (\ref{2.5}), (\ref{2.6}) approximates problem (\ref{2.1}), (\ref{2.2}) with second order in $\tau$ for $\sigma = 1/2$ (symmetric scheme) and with first order for $\sigma \neq 1/2$.
The scheme with weight (\ref{2.5}), (\ref{2.6}) is unconditionally stable at $\sigma \geq 1/2$ in $H$.  
For the solution, the estimate 
\begin{equation}\label{2.7}
 \|y^{n+1}\| \leq \| u^0 \| + \sum_{k=0}^{n} \tau \| f^{k+\sigma}\|,
 \quad \ n = 0,1, \ldots, N-1 ,
\end{equation} 
holds.

To reduce the computational work associated with the complexity of the operator $A$ when finding the approximate solution at the new time level, a general methodological approach to studying systems based on analysis and synthesis is used.
At the decomposition stage (analysis), we identify simpler particular subproblems, the study of which at the composition stage (synthesis) allows us to obtain solutions to the overall problem.
This computational technology of analysis (decomposition) and synthesis (composition) for solving non-stationary problems is implemented using splitting schemes \cite{vabishchevich2024computational}.

For the Cauchy problem (\ref{2.1}), (\ref{2.2}) we use the additive representation of the operator $A$ as the sum of simpler operators (\ref{2.3}), so that 
\begin{equation}\label{2.8}
 \frac{d u}{d t} + \sum_{\alpha =1}^{p}A_{\alpha} y = f(t)
\end{equation} 
with some chosen additive representation of the right-hand side:
\[
 f(t) = \sum_{\alpha =1}^{p}f_{\alpha}(t) .
\] 
For the numerical solution of problem (\ref{2.2}), (\ref{2.2}), (\ref{2.8}) various splitting schemes are used \cite{VabishchevichAdditive}.
We note the main classes of splitting schemes and formulate the conditions for their unconditional stability for a general multi-component decomposition ($p > 1$ in (\ref{2.2})).

For a general multi-component decomposition ($p > 1$ in (\ref{2.8})), we can consider component-wise splitting schemes \cite{Marchuk1990,Yanenko1967}.
The approximate solution at the new time level is found by successively solving auxiliary problems:
\begin{equation}\label{2.9}
\begin{split}
 \frac{y^{n+\alpha/p} - y^{n+(\alpha-1)/p}} {\tau}
  & + A_\alpha (\sigma y^{n+\alpha/p}
   + (1-\sigma)y^{n+(\alpha-1)/p}) \\
  &  = f^{n+\sigma}_\alpha,
  \quad \alpha = 1,2,\ldots,p, \quad n = 0,1,\ldots, N-1.
\end{split}
\end{equation}
Unconditional stability of scheme (\ref{2.3}), (\ref{2.6}), (\ref{2.9}) is established in $H$ under the usual restrictions on the weight $\sigma \geq 1/2$. 
The component-wise splitting scheme has first order accuracy in $\tau$. 
For $\sigma = 1/2, \ \alpha = 1,2,\ldots,p$ and the computational procedure according to the Fryazinov--Strang scheme \cite{Fryazinov1968,Strang1968} 
\[
 A_1 \rightarrow A_2 \rightarrow \cdots \rightarrow A_p \rightarrow A_p \rightarrow \cdots \rightarrow A_1 ,
\] 
second order accuracy is established for smooth solutions.  

We also note regularized additive schemes \cite{VabishchevichAdditive,SamarskiiVabischevich1998},
where no intermediate problems or auxiliary functions are introduced and the original equation is approximated directly. 
In this case, the approximate solution is determined from the equation
\begin{equation}\label{2.10}
  \frac{y^{n+1} - y^{n}} {\tau} + \sum_{\alpha=1}^{p} (I + \sigma \tau A_\alpha )^{-1} A_\alpha y^{n}
  = f^{n+\sigma} .
\end{equation} 
Sufficient conditions for unconditional stability of scheme (\ref{2.3}), (\ref{2.6}), (\ref{2.10}) in $H$ are $\sigma \geq p/2$. 
Computational implementation can be ensured by averaging the solutions of auxiliary problems:
\begin{equation}\label{2.11}
\begin{split}
  \frac{y^{n+1}_\alpha - y^{n}} {p \tau} + 
   & A_\alpha (\sigma y^{n+1}_\alpha
   + (1-\sigma)y^{n}) = f^{n+\sigma}_\alpha,  \\
  & \alpha = 1,2,...,p, \quad n = 0,1,\ldots, N-1 ,\\
  & y^{n+1} = \frac 1p \sum_{\alpha=1}^{p} y^{n+1}_\alpha .
\end{split}
\end{equation}
In this case, scheme (\ref{2.11}) is an additive-averaged scheme \cite{VabishchevichAdditive,GordezianiMeladze1974}. 
Among other splitting schemes for problem (\ref{2.2}), (\ref{2.2}), (\ref{2.8}), we can also note vector additive schemes \cite{Abrashin1990,VabishchevichVector}.

\subsection{Problem with memory effects} 

A typical nonlocal problem is associated with the integro-differential equation
\begin{equation}\label{2.12}
 \frac{d u}{d t} + A u + \int_0^t k(t-s) B u(s) d s = f(t),
 \quad 0 < t \leq T .
\end{equation} 
Memory effects are taken into account by specifying the kernel $k(t)$.

Evolutionary equations with memory (\ref{2.12}) are usually considered under the assumption that the kernel $k(t)$ is positive definite \cite{GripenbergBook1990}.
In this case, for all $T > 0$ the kernel $k(t)$ belongs to $L_1(0, T)$ and the inequality
\begin{equation}\label{2.13}
 \int_{0}^{T} \psi(t) \int_{0}^{t} k(t-s) \psi(s) d s \, d t \geq 0,
 \quad \psi \in C[0,T] ,
\end{equation} 
holds. A sufficient condition for the positive definiteness of the kernel $k(t)$ is (see, e.g., \cite{halanay1965asymptotic}):
\begin{equation}\label{2.14}
 k(t) \geq 0,
 \quad \frac{d k}{d t}  (t) \leq 0,
 \quad \frac{d^2 k}{d t^2}  (t) \geq 0,
 \quad t > 0 . 
\end{equation} 

We assume that the operator $B$ is constant, self-adjoint and nonnegative. Similarly to (\ref{2.3}), we have the additive representation:
\begin{equation}\label{2.15}
 B = \sum_{\beta=1}^{q} B_\beta ,
 \quad B_\beta = B_\beta^* \ge 0,
 \quad \beta  = 1,2, \ldots, q .
\end{equation} 

The complexity of computational algorithms for the approximate solution of the Cauchy problem (\ref{2.2}), (\ref{2.12}) is primarily due to the nonlocality of the equation. When determining the solution at a new time level, we must work with the solution at all previous time instants. We focus on using transformations of the original nonlocal problem to a local one by means of additional quantities when using approximations of the memory kernel $k(t)$ by a sum of exponentials. 

In time, unconditionally stable implicit difference approximations are used. The computational implementation is associated with solving equations that include the operators $A$ and $B$.
Simplification of the problem at the new level is achieved by using splitting schemes that isolate simpler problems taking into account decompositions (\ref{2.3}) and (\ref{2.15}).
Possible splitting options can be conveniently illustrated at the differential level by noting the corresponding time approximations when using component-wise splitting schemes of type (\ref{2.9}). When solving the Cauchy problem for the equation
\[
 \frac{d u}{d t} + \sum_{\gamma=1}^{d} D_\gamma u = \sum_{\gamma=1}^{d} f_\gamma (t),
 \quad 0 < t \leq T ,
\]
multi-component splitting schemes correspond to a chain of simpler problems
\[
  \frac {d v_\gamma}{dt} +
  D_\gamma v_\gamma = f_\gamma (t),
  \quad t^{n} < t \le t^{n+1},
  \quad \gamma =1,2,\ldots,d,
\]
\[
  v_1(0) = u^0,
  \quad  v_1(t^n) =  v_{d}(t^n),
  \quad  v_\gamma(t^{n}) = v_{\gamma-1}(t^{n+1}), 
  \gamma = 2,3,\ldots,d.
\]

For problem (\ref{2.2}) and (\ref{2.12}) we can perform a two-component decomposition, separating local and nonlocal subproblems: the operator $A$ and the operator $B$, respectively, when
\[
 D_1 v = A v,
 \quad  D_2 v = \int_0^t k(t-s) B v(s) d s .
\]
The second main possibility is to simplify the problem based on splitting (\ref{2.3}) and isolating the nonlocal part. 
In this case, we have $d = p+1$, and
\[
 D_\gamma = A_\gamma, 
 \quad \gamma = 1,2, \ldots, d-1,
 \quad D_d v = \int_0^t k(t-s) B v(s) d s .
\]
The general case corresponds to splitting both operator $A$ according to (\ref{2.3}) and operator $B$ according to (\ref{2.15}):
\[
 D_\gamma = A_\gamma, 
 \quad \gamma = 1,2, \ldots, p,
\]
\[
 D_\gamma v = \int_0^t k(t-s) B_\gamma v(s) d s ,
 \quad \gamma = p + 1, p+2, \ldots, p+q = d .
\]

Starting from the above problems on individual time intervals, it is necessary to construct component-wise splitting schemes.
The key feature of the nonlocal problems under consideration is the choice of convenient approximations of the integral operator terms for computational implementation.

\section{Local problem for the extended system of equations}\label{sec:3}

The transition from the nonlocal problem to a local one is ensured by approximating the memory kernel by a sum of exponentials. 
In this case, we have a system of equations that is extended by adding ordinary differential equations for auxiliary functions. Unconditional stability of operator-difference schemes is established using standard two-level time approximations.

\subsection{Transformation of the problem} 

For the approximate solution of problems with memory, we focus (see, e.g., \cite{vabMemory,vabishchevich2023approximate}) on the transition from a nonlocal problem to a local one.
Such a transformation is primarily associated with approximating the kernel by a sum of exponentials.
In order not to complicate the text with non-essential technical details, we restrict ourselves to considering equation (\ref{2.12}) with $f(t) = 0$.

The problem of approximating functions by a sum of exponentials is among the most studied problems of nonlinear approximation \cite{braess2012nonlinear}, but of course requires separate attention. It is not considered in this paper and we assume that the memory kernel $k(t)$ in equation (\ref{2.12}) is represented as
\begin{equation}\label{3.1}
 k(t) = \sum_{i=1}^{m} a_i \exp(-b_i t),
 \quad t \geq 0 . 
\end{equation} 
Moreover, we work under the assumption that 
\begin{equation}\label{3.2}
 a_i > 0,
 \quad b_i > 0,
 \quad   i = 1,2,\ldots, m .
\end{equation} 
Under constraints (\ref{3.2}), the sufficient conditions for positive definiteness (see (\ref{2.14})) of the function $k(t)$ are satisfied. 

Similarly to \cite{vabMemory}, we introduce auxiliary functions
\begin{equation}\label{3.3}
 u_i(t) = \int_{0}^{t} \exp(-b_i (t-s)) u(s) d s ,
 \quad i = 1,2, \ldots, m .  
\end{equation} 
This allows us to write equation (\ref{2.12}) for $f(t) = 0$ as
\begin{equation}\label{3.4}
 \frac{d u}{d t} + A u + \sum_{i=1}^{m} a_i B u_i = 0 . 
\end{equation} 
For $u_i(t), \ i = 1,2, \ldots, m$ from (\ref{3.3}) we have the equations
\begin{equation}\label{3.5}
 \frac{d u_i}{d t} + b_i u_i - u = 0 ,
 \quad i = 1,2, \ldots, m . 
\end{equation} 
The extended system of equations (\ref{3.4}), (\ref{3.5}) is supplemented by initial conditions
\begin{equation}\label{3.6}
  u(0) = u^0, 
 \quad u_i(0) = 0,
 \quad i = 1,2, \ldots, m .  
\end{equation} 

\begin{theorem}\label{t-1}
Let in (\ref{2.12}) the operators $A \ge 0, \ B = B^* > 0$. 
Then for the solution of problem (\ref{3.2}), (\ref{3.4})--(\ref{3.6}) the following stability estimate with respect to initial data holds:
\begin{equation}\label{3.7}
 \|u(t)\|^2 + \sum_{i=1}^{m} a_i \|u_i(t)\|^2_B \leq  \|u^0\|^2,
 \quad 0 < t \leq T .
\end{equation} 
\end{theorem}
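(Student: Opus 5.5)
The estimate (\ref{3.7}) will follow from an energy identity for the extended system (\ref{3.4}), (\ref{3.5}). I multiply (\ref{3.4}) scalarly in $H$ by $u$. For each $i$, I apply the operator $B$ to equation (\ref{3.5}) and take the inner product with $a_i u_i$. The key point is that the coupling terms cancel. From (\ref{3.4}) the memory contributes $\sum_i a_i (B u_i, u)$. From (\ref{3.5}) the corresponding term is $-\sum_i a_i (Bu, u_i)$, and since $B=B^*$ and $H$ is real, $(Bu,u_i)=(Bu_i,u)$. This cancellation is exactly why (\ref{3.5}) is tested with $a_i B u_i$ rather than $u_i$.

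Adding the identities gives
\begin{equation*}
 \frac{1}{2}\frac{d}{dt}\Bigl(\|u\|^2 + \sum_{i=1}^{m} a_i \|u_i\|_B^2\Bigr) + (Au,u) + \sum_{i=1}^{m} a_i b_i \|u_i\|_B^2 = 0 .
\end{equation*}
The time-derivative terms are justified as follows. We have $(du/dt,u)=\frac12 \frac{d}{dt}\|u\|^2$. Because $B$ is constant and self-adjoint, $(B\,du_i/dt,u_i)=\frac12\frac{d}{dt}(Bu_i,u_i)$. Here $\|\cdot\|_B$ is a genuine norm since $B>0$; in fact only $B\ge 0$ is needed, with $\|\cdot\|_B$ understood as a seminorm.

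By the hypothesis $A\ge 0$ we have $(Au,u)\ge 0$, which covers the non-self-adjoint case as well. By (\ref{3.2}), $a_i b_i>0$, and $\|u_i\|_B^2\ge 0$. Hence the energy $E(t)=\|u(t)\|^2+\sum_i a_i\|u_i(t)\|_B^2$ is nonincreasing in $t$.

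Integrating from $0$ to $t$ and using the initial conditions (\ref{3.6}), $u_i(0)=0$, we get $E(t)\le E(0)=\|u^0\|^2$, which is (\ref{3.7}). Since $a_i>0$, the left side of (\ref{3.7}) is indeed a weighted sum of nonnegative quantities.

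The only delicate step is choosing the right multipliers, $u$ for the main equation and $a_iBu_i$ for the auxiliary equations, so that the coupling terms cancel exactly. The symmetry of $B$ is essential here, while $A$ may be non-self-adjoint. Everything else is routine, since the space is finite-dimensional and the solution is smooth.
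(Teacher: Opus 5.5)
Your proof is correct and follows the paper's argument: you test (\ref{3.4}) with $u$ and each equation (\ref{3.5}) with $a_i B u_i$, the coupling terms cancel because $B = B^*$, and the remaining terms are nonnegative by $A \ge 0$ and (\ref{3.2}), so the energy is nonincreasing and starts at $\|u^0\|^2$ by (\ref{3.6}). You also write out the cancellation and the time-derivative identities that the paper's short proof leaves implicit.
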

	
\begin{proof}
Multiply equation (\ref{3.4}) scalarly in $H$ by $u(t)$, and the individual equations (\ref{3.5}) by $a_i B u_i$ and add them. Taking into account the assumptions about the operators $A, \ B$, this gives the inequality
\[
 \frac{d}{dt} \Big (\|u(t)\|^2 + \sum_{i=1}^{m} a_i \|u_i(t)\|^2_B \Big ) \le 0,
\]
from which the desired a priori estimate (\ref{3.7}) follows.
\end{proof}

\subsection{Vector formulation} 

The study of time approximations can be conveniently performed by writing equations (\ref{3.5}) in the form
\begin{equation}\label{3.8}
 a_i B \frac{d u_i}{d t} + a_i b_i B u_i - a_i B u = 0 ,
 \quad i = 1,2, \ldots, m . 
\end{equation} 
Let us write the system (\ref{3.4}), (\ref{3.8}) as a single vector evolution equation of first order.
Define the vector of unknowns
$\bm u = \{u, u_1, \ldots, u_m \}$ and from (\ref{3.4}), (\ref{3.6}), (\ref{3.8}) go to the Cauchy problem
\begin{equation}\label{3.9}
  \bm B \frac{d \bm u}{d t} + \bm A \bm u = 0  ,
\end{equation} 
\begin{equation}\label{3.10}
 \bm u(0) = \bm u^0 ,
\end{equation} 
where $\bm u^0 = \{u^0, 0, \ldots, 0 \}$.
For the operator matrices $\bm B$ and $\bm A$ we have the representation
\begin{equation}\label{3.11}
\begin{split}
 \bm B & = \mathrm{diag} \, \big (I,a_1 B, \ldots, a_m B \big),
\\
\bm A & = \left (\begin{array}{cccc}
  A  & a_1 B  & \cdots &  a_m B   \\
  - a_1 B   &  a_1 b_1 B  & \cdots &  0 \\
  \cdots  & \cdots & \cdots &  0 \\
  - a_m B &  0 & \cdots &  a_m b_m B \\
\end{array}
 \right ) .
\end{split}
\end{equation} 

Problem (\ref{3.9})--(\ref{3.11}) is considered on the direct sum of spaces $\bm H = H \oplus \ldots  \oplus H$,
where for $\bm u, \bm v \in \bm H$ the inner product and norm are defined by
\[
 (\bm u, \bm v) = (u, v) + \sum_{1=1}^{m} (u_i, v_i),
 \quad \|\bm u\| =  (\bm u, \bm u)^{1/2} .
\]
Taking into account (\ref{2.3}) and (\ref{2.15}), we have
\begin{equation}\label{3.12}
 \quad \bm B = \bm B^* > 0,
 \quad \bm A \ge 0 .
\end{equation} 

To obtain the stability estimate, multiply equation (\ref{3.9}) scalarly in $\bm H$ by $\bm u(t)$.
With (\ref{3.12}) this gives 
\[
  \frac{d}{d t} \| \bm u\|^2_{\bm B} \le 0 .
\]
Taking into account the initial conditions (\ref{3.10}), we obtain
\begin{equation}\label{3.13}
 \|\bm u(t)\|^2_{\bm B} \leq 
 \|\bm u^0\|^2_{\bm B} .
\end{equation} 
In our case
\[
\begin{split}
 \|\bm u(t)\|^2_{\bm B} & = \|u(t)\| + \sum_{i=1}^{m} a_i \|u_i(t)\|_{B}^2  , \\
 \|\bm u^0\|^2_{\bm B} & = \|u^0\|^2 ,
\end{split}
\] 
so the a priori estimate (\ref{3.13}) for the vector problem (\ref{3.9})--(\ref{3.11}) gives estimate (\ref{3.7}) for the solution of problem (\ref{3.4})--(\ref{3.6}).

\section{Time approximations}\label{sec:4}

The stability of two-level weighted schemes for solving the Cauchy problem for the formulated vector equation is investigated.
Standard component-wise splitting schemes are constructed with an additive splitting of the operator $\bm A$ based on representation (\ref{2.3}).
The possibilities of constructing unconditionally stable time approximations when splitting the operator of the integral term of the problem are also discussed.

\subsection{Weighted schemes} 

For the approximate solution of the Cauchy problem (\ref{3.9}), (\ref{3.10}) we use the usual two-level weighted schemes with $\sigma = \operatorname{const}$, where 
\begin{equation}\label{4.1}
 \bm B \frac{\bm y^{n+1} - \bm y^{n}}{\tau } + \bm A \bm y^{n+\sigma} = 0,
 \quad n = 0,1,\ldots, N-1 ,
\end{equation} 
\begin{equation}\label{4.2}
 \bm y^0 = \bm u^0 ,
\end{equation} 
using the notation
\[
 \bm y^{n+\sigma} = \sigma \bm y^{n+1} + (1-\sigma ) \bm y^{n} ,
 \quad   \bm y^{n} = \{y^{n}, y_1^{n}, \ldots, y_m^{n}\} .
\] 

The difference scheme (\ref{4.1}), (\ref{4.2}) approximates problem (\ref{3.9}), (\ref{3.10}) with sufficient smoothness of the solution $\bm u(t)$ with first order in $\tau$ for $\sigma \neq 0.5$ and with second order for $\sigma = 0.5$ (Crank-Nicolson scheme).
Based on the general stability (well-posedness) conditions for operator-difference schemes \cite{Samarskii1989,SamarskiiMatusVabischevich2002}, the following statement is established. The peculiarity of the problem under consideration is related to the nonnegativity and non-self-adjointness of the operator $\bm A$ (see (\ref{3.12})).

\begin{theorem}\label{t-2}
Under conditions (\ref{3.12}) the two-level scheme (\ref{4.1}), (\ref{4.2}) is unconditionally stable for $\sigma \geq 0.5$.
Under these constraints, for an approximate solution to the problem, the a priori estimate 
\begin{equation}\label{4.3}
 \|\bm y^{n+1}\|^2_{\bm B} \leq \|\bm u^0\|^2,
 \quad n = 0,1,\ldots ,  N-1 ,
\end{equation} 
holds.
\end{theorem}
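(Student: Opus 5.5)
We will use the standard energy method for two-level schemes. The idea is to rewrite (\ref{4.1}) in canonical form, using the identity
\[
 \bm y^{n+\sigma} = \frac{\bm y^{n+1} + \bm y^{n}}{2} + \Big(\sigma - \frac12\Big)\tau \, \frac{\bm y^{n+1} - \bm y^{n}}{\tau}.
\]
Then (\ref{4.1}) takes the form
\[
 \Big(\bm B + \Big(\sigma - \frac12\Big)\tau \bm A\Big) \frac{\bm y^{n+1} - \bm y^{n}}{\tau} + \bm A \, \frac{\bm y^{n+1} + \bm y^{n}}{2} = 0 .
\]

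Next, we take the scalar product in $\bm H$ of this equation with $2\tau\,\bm w$, where $\bm w = (\bm y^{n+1} - \bm y^{n})/\tau$, and write $\bm v = (\bm y^{n+1} + \bm y^{n})/2$. Since $\bm B$ is self-adjoint (see (\ref{3.12})), the leading term is
\[
 2\tau (\bm B \bm w, \bm w) = \frac{2}{\tau}\big(\|\bm y^{n+1}\|^2_{\bm B} - 2(\bm B\bm y^{n+1},\bm y^n) + \|\bm y^n\|^2_{\bm B}\big).
\]
This is not directly the energy difference, so we instead test with $2\tau \bm v$. This gives
\[
 2\tau (\bm B \bm w, \bm v) = \|\bm y^{n+1}\|^2_{\bm B} - \|\bm y^{n}\|^2_{\bm B},
\]
again by self-adjointness of $\bm B$. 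Testing with $\bm v$ leaves the extra term $2\tau(\sigma-\frac12)\tau(\bm A \bm w, \bm v)$. For a non-self-adjoint $\bm A$ this term has no sign, which is the main obstacle.

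To handle it, we will not test the canonical form with $\bm v$ but the original equation (\ref{4.1}) with $2\tau \bm y^{n+\sigma}$. This gives
\[
 2\tau\Big(\bm B \bm w, \bm v + \Big(\sigma-\frac12\Big)\tau \bm w\Big) + 2\tau(\bm A \bm y^{n+\sigma}, \bm y^{n+\sigma}) = 0 .
\]
The first term equals
\[
 \|\bm y^{n+1}\|^2_{\bm B} - \|\bm y^n\|^2_{\bm B} + 2\Big(\sigma-\frac12\Big)\tau^2 \|\bm w\|^2_{\bm B}.
\]
Only nonnegativity of $\bm A$ is needed, not self-adjointness: $(\bm A \bm z,\bm z)\ge 0$ for all $\bm z$, which follows from (\ref{3.12}). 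Indeed, the off-diagonal blocks $a_iB$ and $-a_iB$ in (\ref{3.11}) cancel in the quadratic form, leaving $(Ay,y)+\sum_i a_ib_i(By_i,y_i)\ge0$. Hence for $\sigma\ge 1/2$ we obtain
\[
 \|\bm y^{n+1}\|_{\bm B}\le \|\bm y^n\|_{\bm B}.
\]

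Iterating this down to $n=0$ gives $\|\bm y^{n+1}\|^2_{\bm B}\le\|\bm y^0\|^2_{\bm B}$. Finally, (\ref{4.2}) and the structure $\bm u^0=\{u^0,0,\ldots,0\}$ give $\|\bm u^0\|^2_{\bm B}=\|u^0\|^2$, which yields (\ref{4.3}). The only delicate point is the choice of the test vector $\bm y^{n+\sigma}$, which sidesteps the non-self-adjointness of $\bm A$.
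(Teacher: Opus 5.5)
Your proof is correct. It uses the same multiplier as the paper: you test (\ref{4.1}) with a multiple of $\bm y^{n+\sigma}$ and drop $(\bm A\bm y^{n+\sigma},\bm y^{n+\sigma})\ge 0$.

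The difference is in how you treat the term $(\bm B(\bm y^{n+1}-\bm y^{n}),\bm y^{n+\sigma})$. The paper invokes Lemma~\ref{l-1}, the inequality $(D(u-v),w)\ge(\|u\|_D-\|v\|_D)\|w\|_D$. You instead use the exact identity
\[
2\big(\bm B(\bm y^{n+1}-\bm y^{n}),\bm y^{n+\sigma}\big)=\|\bm y^{n+1}\|^2_{\bm B}-\|\bm y^{n}\|^2_{\bm B}+(2\sigma-1)\|\bm y^{n+1}-\bm y^{n}\|^2_{\bm B},
\]
which comes from the splitting $\bm y^{n+\sigma}=\bm v+(\sigma-\tfrac12)\tau\bm w$.

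Your version is more elementary and makes the numerical dissipation term explicit. It also gives the decrease of the squared $\bm B$-norm directly. The paper's route implicitly divides by $\|\bm y^{n+\sigma}\|_{\bm B}$, so the degenerate case $\bm y^{n+\sigma}=0$ needs a separate, if trivial, remark there; your identity avoids this.

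The paper's lemma, in turn, gives the unsquared inequality $\|\bm y^{n+1}\|_{\bm B}\le\|\bm y^{n}\|_{\bm B}$ directly. That form carries over to a nonzero right-hand side at once, yielding estimates of the type \eqref{2.7} without any extra manipulation.

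Your preliminary attempts, testing the canonical form with $\bm w$ and with $\bm v$, play no role in the final argument and can be deleted.
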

	
\begin{proof}
Multiply equation (\ref{4.1}) scalarly in $\bm H$ by $\tau \bm y^{n+\sigma}$.
With $\bm A \geq 0$ we obtain
\begin{equation}\label{4.4}
 (\bm B (\bm y^{n+1} - \bm y^{n}), \bm y^{n+\sigma}) \leq 0 .
\end{equation} 
The following statement holds (see \cite{vabishchevich2013flux}).
\begin{lemma}\label{l-1}
Let 
\[
  w = \sigma u + (1-\sigma) v  
\] 
and the constant $\sigma \geq 0.5$ for $u,v$ from some Hilbert space $H_D$ ($D=D^* >0$).
Then
\[
  (D(u-v),w) \geq (\|u\|_D - \|v\|_D) \|w\|_D .
\] 
\end{lemma}

Taking into account that $\bm B = \bm B^* > 0$, from (\ref{4.4}) and Lemma \ref{l-1} we obtain 
\[
 \|\bm y^{n+1}\|^2_{\bm B} \leq \|\bm y^{n}\|^2_{\bm B} .
\]
From this inequality the desired estimate (\ref{4.3}) follows.
\end{proof}

When applying the weighted scheme (\ref{4.1}), (\ref{4.2}) we have corresponding problems for individual solution components.  
In this case, for the approximate solution of problem (\ref{3.4})–(\ref{3.6}) we use the difference scheme  
\begin{equation}\label{4.5}
 \frac{y^{n+1} - y^{n}}{\tau } + A y^{n+\sigma}
 + \sum_{i=1}^{m} a_i B y_i^{n+\sigma} = 0,
\end{equation} 
\begin{equation}\label{4.6}
 \frac{y_i^{n+1} - y_i^{n}}{\tau} + b_i y_i^{n+\sigma } - y^{n+\sigma} = 0 ,
 \quad i = 1,2, \ldots, m ,
 \quad n = 0,1, \ldots ,   N-1 ,
\end{equation} 
\begin{equation}\label{4.7}
 y^{0} = u^0,
 \quad y_i^{0} = 0,
 \quad i = 1,2, \ldots, m . 
\end{equation} 
For the approximate solution of problem (\ref{4.5})–(\ref{4.7}), inequality (\ref{4.3}) gives the a priori estimate  
\begin{equation}\label{4.8}
 \|y^{n+1}\|^2 + \sum_{i=1}^{m} a_i \|y_i^{n+1}\|_{B}^2  \leq \|u^0\|^2 ,
 \quad n = 0,1, \ldots .
\end{equation}  
Estimate (\ref{4.8}) is a discrete analogue of estimate (\ref{3.7}) for the solution of the differential problem (\ref{3.4})--(\ref{3.6}).  

To find the solution at the new time level, it is convenient to write equations (\ref{4.5}), (\ref{4.6}) in the form  
\begin{equation}\label{4.9}
 \frac{y^{n+\sigma} - y^{n}}{\sigma\tau } + A y^{n+\sigma}
 + \sum_{i=1}^{m} a_i B y_i^{n+\sigma} = 0,
\end{equation} 
\begin{equation}\label{4.10}
 \frac{y_i^{n+\sigma} - y_i^{n}}{\sigma\tau} + b_i y_i^{n+\sigma } - y^{n+\sigma} = 0 ,
 \quad i = 1,2, \ldots, m ,
 \quad n = 0,1, \ldots ,  N-1.
\end{equation} 
From equation (\ref{4.10}) we obtain
\begin{equation}\label{4.11}
 y_i^{n+\sigma} = \sigma \tau r_i  y^{n+\sigma} + \chi_i^{n},
 \quad r_i = \frac{1}{1 + \sigma b_i \tau } ,
 \quad \chi_i^{n} = r_i y_i^{n} ,
 \quad i = 1,2,\ldots, m .
\end{equation}  
Substituting this into equation (\ref{4.9}) gives the equation for finding $y^{n+\sigma}$:
\begin{equation}\label{4.12}
 \Big (1 + \sigma \tau A + \sigma \tau \mu B \Big ) y^{n+\sigma} = y^{n} + \chi^{n} ,
\end{equation} 
where
\[
  \mu = \sum_{i=1}^{m} a_i r_i , 
  \quad \chi^{n} = y^{n} - \sum_{i=1}^{m} a_i B \chi_i^{n} .
\] 
The transition to the next time level $n+1$ is ensured by solving problem (\ref{4.12}) for $y^{n+\sigma}$ and computing the auxiliary quantities $y_i^{n+\sigma}, \ i = 1,2,\ldots, m,$ according to (\ref{4.11}).  
After that, we set
\[
\begin{split}
  y^{n+1} & = \frac{1}{\sigma} (y^{n+\sigma} - (1-\sigma)y^{n}), \\
  y_i^{n+1} & = \frac{1}{\sigma} (y_i^{n+\sigma} - (1-\sigma)y_i^{n}),   
   \quad i = 1,2,\ldots, m .
\end{split}
\]
In the considered problem with memory, it is necessary to solve $m$ simple auxiliary local evolution problems by explicitly computing their solutions at the new time level.
The main computational work is associated (see (\ref{4.12})) with solving the problem with the operator $I + \sigma \tau A + \sigma \tau \mu B$. A reduction in computational work can be achieved, in particular, by separating subproblems with individual operators $A$ and $B$, and their components in representations (\ref{2.3}) and (\ref{2.15}).

\subsection{Splitting schemes} 

Standard splitting schemes for the approximate solution of problem (\ref{3.9}), (\ref{3.10}) under conditions (\ref{3.12}) are based (see, e.g., \cite{Marchuk1990,VabishchevichAdditive}) on the additive representation of the operator $\bm A$:
\begin{equation}\label{4.13}
 \bm A = \sum_{\gamma=1}^{d} \bm A_{\gamma} ,
 \quad \bm A_{\gamma} \ge 0 ,
 \quad \gamma = 1,2, \ldots, d .
\end{equation} 
When using component-wise splitting schemes, the approximate solution is determined by sequentially solving problems for individual operator terms $\bm A_{\gamma} \ge 0, \  \gamma = 1,2, \ldots, d$ in (\ref{4.13}):
\begin{equation}\label{4.14}
\begin{split}
 \bm B \frac{\bm y^{n+\gamma/d} - \bm y^{n+(\gamma-1)/d}}{\tau} & + 
 \bm A_\gamma \big (\sigma \bm y^{n+\gamma/d} + (1-\sigma) \bm y^{n+(\gamma-1)/d} \big ) = 0, \\
 \quad \gamma & = 1,2, \ldots, d  ,
 \quad n = 0,1, \ldots, N-1 .
\end{split}
\end{equation} 

\begin{theorem}\label{t-3}
The two-level operator-difference scheme (\ref{3.12}), (\ref{4.2}), (\ref{4.14}) with additive representation (\ref{4.13}) of the operator $\bm A$ is unconditionally stable for $\sigma \geq 0.5$ in $\bm H_{\bm B}$. The stability estimate is:
\begin{equation}\label{4.15}
 \|\bm y^{n+1}\|^2_{\bm B} \leq \|\bm u^0\|^2_{\bm B},
 \quad n = 0,1,\ldots,  N-1.
\end{equation} 
\end{theorem}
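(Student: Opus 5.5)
The plan is to reproduce the argument of Theorem~\ref{t-2} separately for each fractional step of scheme (\ref{4.14}), and then chain the resulting inequalities. Fix $n$ and $\gamma \in \{1,\ldots,d\}$, and set
\[
 \bm v = \bm y^{n+\gamma/d}, \quad \bm w = \bm y^{n+(\gamma-1)/d}, \quad \bm z = \sigma \bm v + (1-\sigma)\bm w .
\]

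\textbf{Step 1 (one fractional step).} Multiply the $\gamma$-th equation of (\ref{4.14}) scalarly in $\bm H$ by $\tau \bm z$. Since $\bm A_\gamma \ge 0$ by (\ref{4.13}), the operator term $\tau(\bm A_\gamma \bm z, \bm z)$ is nonnegative and can be dropped. This gives
\[
 (\bm B(\bm v - \bm w), \bm z) \le 0 .
\]
Only nonnegativity is used here, so no self-adjointness of $\bm A_\gamma$ is needed. This matters, because the individual terms of $\bm A$ (for instance, the skew coupling blocks $\pm a_i B$) are not self-adjoint.

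\textbf{Step 2 (apply the lemma).} Since $\bm B = \bm B^* > 0$ by (\ref{3.12}) and $\sigma \ge 0.5$, Lemma~\ref{l-1} applies with $D = \bm B$. It yields
\[
 (\|\bm v\|_{\bm B} - \|\bm w\|_{\bm B})\, \|\bm z\|_{\bm B} \le 0 .
\]
There are two cases.
\begin{itemize}
\item If $\|\bm z\|_{\bm B} > 0$, then $\|\bm v\|_{\bm B} \le \|\bm w\|_{\bm B}$ directly.
\item If $\bm z = 0$, the equation reduces to $\bm B(\bm v - \bm w) = 0$. Hence $\bm v = \bm w$ because $\bm B$ is invertible, and again $\|\bm v\|_{\bm B} \le \|\bm w\|_{\bm B}$.
\end{itemize}
This degenerate case is the only point where the argument needs a little care. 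The same gap is implicitly present in Theorem~\ref{t-2}, and I would handle it identically there.

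\textbf{Step 3 (chain over $\gamma$ and $n$).} Iterating over $\gamma = 1,\ldots,d$ gives $\|\bm y^{n+1}\|_{\bm B} \le \|\bm y^{n}\|_{\bm B}$. Induction on $n$ together with the initial condition (\ref{4.2}) then gives (\ref{4.15}).

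I expect no real obstacle beyond confirming that each fractional step is uniquely solvable, so that the scheme is well defined. The operator $\bm B + \sigma\tau \bm A_\gamma$ satisfies $((\bm B + \sigma\tau\bm A_\gamma)\bm x, \bm x) \ge (\bm B \bm x, \bm x) > 0$ for $\bm x \ne 0$. It is therefore injective, hence invertible in the finite-dimensional space $\bm H$. This holds for every $\tau > 0$, which is exactly what unconditional stability requires.
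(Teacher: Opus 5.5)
Your proof is correct and follows the paper's argument: you multiply each fractional step of (\ref{4.14}) by $\tau\bm z$, discard the nonnegative term $\tau(\bm A_\gamma\bm z,\bm z)$, apply Lemma~\ref{l-1} in $\bm H_{\bm B}$, and chain the resulting inequalities over $\gamma$ and $n$. Your treatment of the case $\bm z = 0$ and of the invertibility of $\bm B + \sigma\tau\bm A_\gamma$ supplies details the paper leaves implicit. Alternatively, the identity $2(\bm B(\bm v-\bm w),\bm z) = \|\bm v\|_{\bm B}^2 - \|\bm w\|_{\bm B}^2 + (2\sigma-1)\|\bm v-\bm w\|_{\bm B}^2$ gives the one-step bound directly, so you would need neither the lemma nor the degenerate-case argument.
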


\begin{proof}
Multiply each equation (\ref{4.14}) scalarly in $\bm H$ by $\tau \sigma \bm y^{n+\gamma/d} + (1-\sigma) \bm y^{n+(\gamma-1)/d}$. Taking into account $\bm A_\gamma \geq 0$ and Lemma~\ref{l-1}, similarly to the proof of Theorem~\ref{t-2} we obtain the inequalities
\[
\|\bm y^{n+\gamma/d}\| \le \|\bm y^{n+(\gamma-1)/d}\| ,
 \quad \gamma = 1,2, \ldots, d  ,
\]
from which estimate (\ref{4.15}) follows.
\end{proof}
 
Various versions of splitting schemes (\ref{3.12}), (\ref{4.2}), (\ref{4.13}), (\ref{4.14}) are associated with specifying the operator terms $\bm A_{\gamma}, \ \gamma = 1,2, \ldots, d$ in the additive representation (\ref{4.13}).
It is natural to separate the memory effects, when (see (\ref{3.11}))
\[
\begin{split}
 \bm A_1 & = \mathrm{diag} \, \big (A, 0, \ldots, 0 \big),
\\
\bm A_2 & = {\left (\begin{array}{cccc}
  0  & a_1 B  & \cdots &  a_m B   \\
  - a_1 B   &  a_1 b_1 B  & \cdots &  0 \\
  \cdots  & \cdots & \cdots &  0 \\
  - a_m B &  0 & \cdots &  a_m b_m B \\
\end{array}
 \right ) . }
\end{split}
\] 
For the problems we consider with $A \ge 0$, $B = B^* > 0$, the conditions $\bm A_1 \ge 0, \ \bm A_2 \ge 0$ are obviously satisfied. 

At the first half-step ($d=2$) we solve the problem without taking memory into account, which is taken into account in the second half-step. Taking into account that $y_i^{n+1/2} = y_i^{n}, \ i = 1,2, \ldots, m$, the computational formulas are written as:
\[
 \frac{y^{n+1/2} - y^{n}}{\tau } + A (\sigma y^{n+1/2} + (1-\sigma)y^{n}) = 0,
\]
\[
 \frac{y^{n+1} - y^{n+1/2}}{\tau } + \sum_{i=1}^{m} a_i B (\sigma y_i^{n+1} + (1-\sigma)y_i^{n}) = 0 ,
\]
\[
 \frac{y_i^{n+1} - y_i^{n}}{\tau} + b_i (\sigma y_i^{n+1} + (1-\sigma)y_i^{n})  - \sigma y^{n+1} - (1-\sigma)y^{n+1/2} = 0 ,
 \quad n = 0,1, \ldots ,   N-1 ,
\]
\[
 y^{0} = u^0,
 \quad y_i^{0} = 0,
 \quad i = 1,2, \ldots, m . 
\]
With this, the transition to the new time level is implemented as follows.
\begin{enumerate}
\item First, given the known right-hand side, $y^{n+1/2}$ is found from the equation
\[
  (1 + \sigma \tau A) y^{n+1/2} = \varphi .
\]
\item Then, for $y^{n+1}$ using the notation introduced earlier (see (\ref{4.12})), the equation 
\[
  (1 + \sigma \tau \mu B) y^{n+1/2} = \psi 
\] 
is solved.
\item From the found $y^{n+1/2}$ and $y^{n+1}$, the $y_i^{n+1}, \ i = 1,2, \ldots, m$ are computed.
\end{enumerate}

A more general case corresponds to splitting the operator $A$.
For decomposition (\ref{2.3}) set ($d = p+1$)
\[
\begin{split}
 \bm A_\gamma & = \mathrm{diag} \, \big (A_\gamma, 0, \ldots, 0 \big),
 \quad \gamma = 1,2, \ldots, p ,
\\
\bm A_{p+1} & = {\left (\begin{array}{cccc}
  0  & a_1 B  & \cdots &  a_m B   \\
  - a_1 B   &  a_1 b_1 B  & \cdots &  0 \\
  \cdots  & \cdots & \cdots &  0 \\
  - a_m B &  0 & \cdots &  a_m b_m B \\
\end{array}
 \right ) . }
\end{split}
\] 
The features of such splitting are associated with the approximations
\[
  (1 + \sigma \tau A_\gamma) y^{n+\gamma/d} = \varphi_\gamma,
   \quad \gamma = 1,2, \ldots, p .
\]
In this case, instead of solving the equation with the operator $A$, $p$ problems with operators $A_\gamma, \ \gamma = 1,2, \ldots, p$ are solved sequentially.

Constructing splitting schemes based on decomposition (\ref{2.3}) of the operator $B$ is difficult. The problem is that in this case we must (see (\ref{3.11})) split not only the operator $\bm A$ in equation (\ref{3.9}), but also the operator $\bm B$. The theory and computational practice of constructing additive operator-difference schemes with splitting of the operator at the time derivative are not very well developed.
Some possibilities are noted in \cite{vabishchevich2012new,vabishchevich2024splitting}. Based on the available results, it is not very clear how to construct decomposition-composition schemes for our problem (\ref{3.9})--(\ref{3.11}).

Consider a special case where the operator $B$ is
\begin{equation}\label{4.16}
  B = C^* C,
  \quad C = \sum_{\beta = 1}^{q} C_\beta .
\end{equation}
with constant operators $C_\beta, \ \beta = 1,2, \ldots, q$.
Previously, splitting schemes with such a factorized operator were constructed \cite{vabishchevich2022splitting} for second-order evolution equations.
Instead of (\ref{3.3}), we define the auxiliary functions as follows:
\[
 u_i(t) = \int_{0}^{t} \exp(-b_i (t-s)) C u(s) d s ,
 \quad i = 1,2, \ldots, m .    
\]
The main equation (compare with (\ref{3.3})) is written as
\begin{equation}\label{4.17}
 \frac{d u}{d t} + A u + \sum_{i=1}^{m} a_i C^* u_i = 0 . 
\end{equation} 
For $u_i(t), \ i = 1,2, \ldots, m$ we obtain
\begin{equation}\label{4.18}
 \frac{d u_i}{d t} + b_i u_i - Cu = 0 ,
 \quad i = 1,2, \ldots, m . 
\end{equation} 
Taking into account the initial conditions (\ref{3.6}) for the solution we obtain the a priori estimate
\[
 \|u(t)\|^2 + \sum_{i=1}^{m} a_i \|u_i(t)\|^2 \leq  \|u^0\|^2,
 \quad 0 < t \leq T .
\]

Problem (\ref{3.6}), (\ref{4.17}), (\ref{4.18}) is written in vector form (\ref{3.9}), (\ref{3.10}) where now
\begin{equation}\label{4.19}
\begin{split}
\bm A & = \left (\begin{array}{cccc}
  A  & a_1 C^*  & \cdots &  a_m C^*   \\
  - a_1 C   &  a_1 b_1 I  & \cdots &  0 \\
  \cdots  & \cdots & \cdots &  0 \\
  - a_m C &  0 & \cdots &  a_m b_m I \\
\end{array}
 \right ) , \\
\bm B & = \mathrm{diag} \, \big (I,a_1 I, \ldots, a_m I \big) .
\end{split}
\end{equation} 
The key point that allows constructing splitting schemes based on (\ref{2.3}), (\ref{4.16}) is that now the operator $\bm B$ in (\ref{4.19}) does not depend on either $A$ or $C$.

Component-wise splitting schemes (\ref{3.12}), (\ref{4.2}), (\ref{4.14}) are constructed taking into account (\ref{4.19}) based on the general decomposition of the operator $\bm A$:
\[
\begin{split}
 \bm A_\gamma & = \mathrm{diag} \, \big (A_\gamma, 0, \ldots, 0 \big),
 \quad \gamma = 1,2, \ldots, p ,
\\
\bm A_{\gamma} & = {\left (\begin{array}{cccc}
  0  & a_1 C_{\gamma-p}^*  & \cdots &  a_m C_{\gamma-p}^*   \\
  - a_1 C_{\gamma-p}   &  a_1 b_1 / q \, I  & \cdots &  0 \\
  \cdots  & \cdots & \cdots &  0 \\
  - a_m C_{\gamma-p} &  0 & \cdots &  a_m b_m / q \, I\\
\end{array}
 \right ) , \quad \gamma = p+1, p+2, \ldots,  d = p+q . }
\end{split}
\] 
The nonnegativity conditions for the operators $\bm A_\gamma, \ \gamma = 1,2, \ldots, p$ are easily verified.

\section{Other problems}\label{sec:5}

Splitting schemes for other evolution problems with memory are constructed similarly.
We will not deviate too much from the main content of the paper and will not consider, for example, second-order evolution equations or systems of equations.
We will only note the possibilities associated with equations in which the integral term describes memory effects not of the solution itself, but of the time derivative of the solution.
 
\subsection{Differential problem} 

Together with (\ref{2.12}) we consider the integro-differential equation 
\begin{equation}\label{5.1}
 \frac{d u}{d t} + A u + \int_0^t k(t-s) B \frac{d u}{d s}(s) d s = 0,
 \quad 0 < t \leq T ,
\end{equation} 
which takes into account the history of the time derivative of the solution.
In \cite{vabishchevich2023approximate}, it is assumed that the operators $A$ and $B$ are self-adjoint and positive:
\begin{equation}\label{5.2}
 A = A^* > 0 ,
 \quad B = B^* > 0 .
\end{equation} 

The transformation of the nonlocal Cauchy problem (\ref{2.2}), (\ref{5.1}), (\ref{5.2}) to a local one is ensured by representing the memory kernel $k(t)$ as a sum of exponentials according to (\ref{3.1}), (\ref{3.2}).
We introduce (see \cite{vabishchevich2023approximate}) auxiliary functions 
\begin{equation}\label{5.3}
 u_i(t) = \int_{0}^{t} \exp(-b_i (t-s)) \frac{d u}{d s}(s) d s ,
 \quad i = 1,2, \ldots, m .  
\end{equation} 
In this case, equation (\ref{5.1}) is again written in the form (\ref{3.4}), and for $u_i(t), \ i = 1,2, \ldots, m$ from (\ref{5.3}) we obtain
\begin{equation}\label{5.4}
 \frac{d u_i}{d t} + b_i u_i - \frac{d u}{d t} = 0 ,
 \quad i = 1,2, \ldots, m . 
\end{equation} 

Taking into account that 
\[
  u_i = \frac{1}{b_i} \frac{d u}{d t} - \frac{1}{b_i} \frac{d u_i}{d t} ,
\]
we write equation (\ref{3.4}) in the form
\begin{equation}\label{5.5}
 \Big (1 + \sum_{i=1}^{m} \frac{a_i}{b_i} B \Big ) \frac{d u}{d t} + A u   -  \sum_{i=1}^{m} \frac{a_i}{b_i} B \frac{d u_i}{d t} = 0.
\end{equation} 
The Cauchy problem (\ref{3.6}), (\ref{5.4}), (\ref{5.5}) is written in vector form (\ref{3.9}), (\ref{3.10}) with 
\begin{equation}\label{5.6}
\begin{split}
 \bm A & = \mathrm{diag} \, \big (A, a_1 B, \ldots, a_m B \big),
\\
\bm B & = \left (\begin{array}{cccc}
  I + {\displaystyle \sum_{i=1}^{m} \frac{a_i}{b_i} B  } & - {\displaystyle \frac{a_1}{b_1} B}  & \cdots &  - {\displaystyle \frac{a_m}{b_m} B}   \vspace{2mm} \\
  - {\displaystyle \frac{a_1}{b_1} B }  &  {\displaystyle \frac{a_1}{b_1} B } & \cdots &  0 \\
  \cdots  & \cdots & \cdots &  0 \\
  - {\displaystyle \frac{a_m}{b_m} B } &  0 & \cdots &  {\displaystyle \frac{a_m}{b_m} B } \\
\end{array}
 \right ) .
\end{split}
\end{equation} 

Under assumptions (\ref{5.2}) we obtain
\begin{equation}\label{5.7}
 \bm B = \bm B^* \geq 0,
 \quad \bm A = \bm A^* > 0 .
\end{equation} 
We multiply scalarly in $\bm H$ equation (\ref{3.9}) by $d \bm u / dt$. Taking into account the properties of the operators (\ref{5.7}), we obtain
\[
 0 = \Big (\bm B \frac{d \bm u}{d t}, \frac{d \bm u}{d t} \Big ) + \frac{1}{2} \frac{d}{d t} \|\bm u\|^2_{\bm A}  
  \le \frac{1}{2} \frac{d}{d t} \|\bm u\|^2_{\bm A} .
\]
Thus we arrive at the estimate
\begin{equation}\label{5.8}
 \|\bm u(t)\|^2_{\bm A} \leq \|\bm u^0\|^2_{\bm A} ,
  \quad 0 < t \leq T .
\end{equation} 
For the individual solution components we have
\[
 \|\bm u(t)\|^2_{\bm A} = \|u(t)\|^2_A + \sum_{i=1}^{m} a_i \|u_i(t)\|_{B}^2 , 
 \quad \|\bm u^0\|^2_{\bm A} = \|u^0\|^2_A .
\] 

In this paper we consider the more general case of nonnegativity of the operator $A$ --- see (\ref{2.3}). Under these conditions, instead of (\ref{5.7}) we have
\begin{equation}\label{5.9}
 \bm B = \bm B^* \geq 0,
 \quad \bm A \ge 0 .
\end{equation} 
We cannot obtain standard a priori estimates in spaces generated by such operators. Instead of the stability estimate (\ref{5.9}) for all solution components, we restrict ourselves to the estimate
\begin{equation}\label{5.10}
 \|u(t)\|^2 \leq \|u^0\|^2 ,
  \quad 0 < t \leq T .
\end{equation} 

To prove (\ref{5.10}), multiply equation (\ref{3.9}) by $\bm u(t)$. Taking into account conditions (\ref{5.9}) we obtain
\[
  \frac{d} {d t} (\bm B \bm u,\bm u ) \le 0 .
\]
Consequently, we have
\[
 (\bm B \bm u(t),\bm u(t)) \le (\bm B \bm u^0,\bm u^0 ) .
\]
Taking into account (see (\ref{5.6}))
\[
  (\bm B \bm u(t),\bm u(t)) \ge \|u(t)\|^2
\]
and the initial conditions (\ref{3.6}), this gives (\ref{5.10}).
It is precisely the stability estimate (\ref{5.10}) for the solution of the problem without a priori estimates of the auxiliary solutions that we will focus on when choosing time approximations.
 
\subsection{Two-level operator-difference schemes} 

For simplicity, we restrict ourselves to the case of symmetric approximations, where the weight parameter $\sigma = 1/2$. The more general case $\sigma \ge 1/2$ is considered similarly. The approximate solution of problem (\ref{3.9}), (\ref{3.10}) under conditions (\ref{5.9}) is determined from the equation
\begin{equation}\label{5.11}
 \bm B \frac{\bm y^{n+1} - \bm y^{n}}{\tau } + \bm A \frac{\bm y^{n+1} + \bm y^{n}}{2}   = 0,
 \quad n = 0,1,\ldots, N-1 ,
\end{equation} 
with initial conditions (\ref{4.2}).

\begin{theorem}\label{t-4}
Let in scheme (\ref{4.2}), (\ref{5.6}), (\ref{5.11}) the operators $A \ge 0$, $B = B^* > 0$. Then the scheme is unconditionally stable and the a priori estimate 
\begin{equation}\label{5.12}
 \|y^{n+1}\|^2 \leq \|u^0\|^2,
 \quad n = 0,1,\ldots ,  N-1 ,
\end{equation} 
holds.
\end{theorem}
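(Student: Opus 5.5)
The plan is to repeat the energy argument used for (\ref{5.10}) at the discrete level, with the Crank--Nicolson average playing the role of $\bm u(t)$. We take the scalar product in $\bm H$ of equation (\ref{5.11}) with $\tau(\bm y^{n+1}+\bm y^{n})/2$. By (\ref{5.9}), $\bm A\ge 0$, so the second term contributes a nonnegative quantity:
\[
 \Big(\bm A \frac{\bm y^{n+1}+\bm y^{n}}{2}, \frac{\bm y^{n+1}+\bm y^{n}}{2}\Big) \ge 0 .
\]
Indeed, $\bm A$ in (\ref{5.6}) is block diagonal with blocks $A\ge 0$ and $a_i B\ge 0$. Since $\bm B=\bm B^*$, the first term telescopes exactly:
\[
 \big(\bm B(\bm y^{n+1}-\bm y^{n}),\bm y^{n+1}+\bm y^{n}\big)=(\bm B\bm y^{n+1},\bm y^{n+1})-(\bm B\bm y^{n},\bm y^{n}).
\]
Note that for $\sigma=1/2$ this needs only self-adjointness, not positivity. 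It is therefore better than invoking Lemma~\ref{l-1}, which requires $D>0$, while here $\bm B$ is merely nonnegative. Combining the two facts gives $(\bm B\bm y^{n+1},\bm y^{n+1})\le(\bm B\bm y^{n},\bm y^{n})$, and iterating down to $n=0$ gives $(\bm B\bm y^{n+1},\bm y^{n+1})\le(\bm B\bm y^{0},\bm y^{0})$.

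Next we evaluate both ends using the structure of $\bm B$ from (\ref{5.6}). Expanding the quadratic form gives the key identity
\[
 (\bm B\bm v,\bm v)=\|v\|^2+\sum_{i=1}^{m}\frac{a_i}{b_i}\big(B(v-v_i),v-v_i\big),\qquad \bm v=\{v,v_1,\ldots,v_m\}.
\]
To check it, the diagonal entries give $\|v\|^2+\sum \frac{a_i}{b_i}(Bv,v)+\sum\frac{a_i}{b_i}(Bv_i,v_i)$. The off-diagonal blocks give $-2\sum\frac{a_i}{b_i}(Bv,v_i)$, where we use $B=B^*$. These terms complete to the squares above. Because $a_i,b_i>0$ by (\ref{3.2}) and $B\ge 0$, we get $(\bm B\bm y^{n+1},\bm y^{n+1})\ge\|y^{n+1}\|^2$. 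At the initial level, (\ref{4.2}) gives $\bm y^0=\{u^0,0,\ldots,0\}$, so $(\bm B\bm y^0,\bm y^0)=\|u^0\|^2+\sum\frac{a_i}{b_i}(Bu^0,u^0)$.

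This is the main obstacle. The initial value of the energy contains the extra term $\sum\frac{a_i}{b_i}(Bu^0,u^0)$, whereas (\ref{5.12}) has only $\|u^0\|^2$ on the right. The same issue is hidden in the continuous argument for (\ref{5.10}). I would first check whether the natural initial condition is actually $u_i(0)=u(0)$, or whether (\ref{5.3}) should be read with $u$ replaced by its jump at $t=0$. With $y_i^0=u^0$, the identity above makes the extra term vanish, and (\ref{5.12}) follows at once. If instead the initial data $y_i^0=0$ from (\ref{4.7}) must be kept, the honest conclusion of the argument is
\[
 \|y^{n+1}\|^2\le\Big(\Big(I+\sum_{i=1}^m\frac{a_i}{b_i}B\Big)u^0,u^0\Big),
\]
which is still unconditional stability, with the constant in (\ref{5.12}) replaced by this energy norm of $u^0$. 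I would present the proof with the telescoping identity and the sum-of-squares decomposition as the two essential steps, and handle the initial term according to which reading is adopted.
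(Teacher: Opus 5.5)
Your route is the paper's own. You multiply (5.11) by $\tau(\bm y^{n+1}+\bm y^n)/2$, telescope to $(\bm B\bm y^{n+1},\bm y^{n+1})\le(\bm B\bm y^0,\bm y^0)$, and bound the left side from below by $\|y^{n+1}\|^2$. Your sum-of-squares identity is correct, and the obstacle you isolate is real. The paper closes its proof with ``similarly to (5.10)'', which tacitly takes $(\bm B\bm y^0,\bm y^0)=\|u^0\|^2$. For $\bm y^0=\{u^0,0,\ldots,0\}$ this quantity is actually $\|u^0\|^2+\sum_i \frac{a_i}{b_i}(Bu^0,u^0)$. (A side remark: your identity shows $\bm B>0$ whenever $B>0$, so Lemma 1 would also apply. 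The telescoping is just its $\sigma=1/2$ case.)

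The gap cannot be closed, because (5.12) is false under the stated hypotheses. Take $H$ two-dimensional Euclidean, $m=1$, $a_1=b_1=1$, $u^0=e_1$, and
\[
A=\begin{pmatrix}0&-1\\ 1&0\end{pmatrix},\qquad B=\begin{pmatrix}1&\delta\\ \delta&1\end{pmatrix},\quad 0<\delta<1,
\]
so that $(Av,v)=0$ and $B=B^*>0$. The second block row of (5.11) gives $y_1^1=r(y^1-u^0)$ with $r=(1+\tau/2)^{-1}$. The sum of the two block rows then gives
\[
y^1=u^0-\tau\Big(I+\frac{\tau}{2}(A+rB)\Big)^{-1}Au^0,\qquad \|y^1\|^2=1+r\delta\tau^2+O(\tau^3).
\]
The expansion uses $(Au^0,u^0)=0$, $(A^2u^0,u^0)=-\|Au^0\|^2$ and $(BAu^0,u^0)=\delta$. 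For $\delta=0.9$, $\tau=0.1$ one gets $\|y^1\|^2\approx 1.0078>\|u^0\|^2$, while your bound $\|y^1\|^2\le 1+(Bu^0,u^0)=2$ holds. The analogous Taylor expansion of the exact solution gives $\|u(t)\|^2=1+\delta t^2+O(t^3)$, so (5.10) fails as well.

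Hence your second conclusion, $\|y^{n+1}\|^2\le((I+\sum_i\frac{a_i}{b_i}B)u^0,u^0)$, is the correct unconditional-stability statement, and it is what the paper's argument actually proves. Your first option is not a reading of the stated scheme. Setting $y_i^0=u^0$ approximates $u'+Au+\int_0^tk(t-s)Bu'(s)\,ds+k(t)Bu^0=0$, which is a different problem from (5.1). It therefore does not prove the theorem as stated.
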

	
\begin{proof}
Multiplying equation (\ref{5.11}) by $\tau (\bm y^{n+1} + \bm y^{n})/2$, we obtain
\[
  (\bm B \bm y^{n+1}, \bm y^{n+1}) \le (\bm B \bm y^{0}, \bm y^{0}) .
\]
Similarly to (\ref{5.10}), from this inequality we obtain the desired a priori estimate (\ref{5.12}).
\end{proof}

Additive operator-difference schemes are constructed based on splitting the operator $A$ in the form (\ref{2.3}). In this case,
\begin{equation}\label{5.13}
 \bm A_\alpha = \mathrm{diag} \, \big (A_\alpha, a_1 B, \ldots, a_m B \big),
 \quad \alpha = 1,2, \ldots, p .
\end{equation} 
When using the component-wise splitting scheme, the approximate solution is determined from
\begin{equation}\label{5.14}
\begin{split}
 \bm B \frac{\bm y^{n+\alpha/p} - \bm y^{n+(\alpha-1)/p}} {\tau}
  & + \bm A_\alpha \frac{\bm y^{n+\alpha/p} + \bm y^{n+(\alpha-1)/p}}{2} = 0, \\
  \alpha & = 1,2,\ldots,p, \quad n = 0,1,\ldots, N-1.
\end{split}
\end{equation}
Similarly to Theorem \ref{t-4}, the following statement is established.

\begin{theorem}\label{t-5}
For $A \ge 0$, $B = B^* > 0$ the scheme (\ref{4.2}), (\ref{5.6}), (\ref{5.14}) is unconditionally stable and the a priori estimate (\ref{5.12}) holds.
\end{theorem}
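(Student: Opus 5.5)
We follow the proof of Theorem~\ref{t-4} stage by stage, i.e.\ separately for each substep $\alpha$ of the component-wise scheme (\ref{5.14}). Write $\bm v = \bm y^{n+(\alpha-1)/p}$ and $\bm w = \bm y^{n+\alpha/p}$, and multiply equation (\ref{5.14}) scalarly in $\bm H$ by $\tau(\bm w + \bm v)/2$. Since $\bm B = \bm B^*$ (see (\ref{5.6}), (\ref{5.9})), the first term is an exact difference:
\[
 \Big(\bm B(\bm w - \bm v), \frac{\bm w + \bm v}{2}\Big) = \frac{1}{2}\big((\bm B \bm w,\bm w) - (\bm B \bm v,\bm v)\big).
\]
The second term equals $\frac{\tau}{4}(\bm A_\alpha(\bm w+\bm v),\bm w+\bm v)$.

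The one point requiring verification is that every $\bm A_\alpha$ in (\ref{5.13}) is nonnegative. This is immediate from the block-diagonal structure. For $\bm z = \{z, z_1,\ldots,z_m\}$,
\[
 (\bm A_\alpha \bm z,\bm z) = (A_\alpha z,z) + \sum_{i=1}^m a_i (B z_i,z_i) \ge 0 ,
\]
because $A_\alpha \ge 0$ by (\ref{2.3}), $B=B^*>0$, and $a_i>0$ by (\ref{3.2}). Consequently each substep gives
\[
 (\bm B \bm y^{n+\alpha/p},\bm y^{n+\alpha/p}) \le (\bm B \bm y^{n+(\alpha-1)/p},\bm y^{n+(\alpha-1)/p}), \quad \alpha=1,\ldots,p .
\]
Chaining these over $\alpha$ and then over $n$ yields $(\bm B\bm y^{n+1},\bm y^{n+1}) \le (\bm B \bm y^0,\bm y^0)$.

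To finish, we use two facts from the argument for (\ref{5.10}). First, the initial data (\ref{4.2}) with $\bm u^0=\{u^0,0,\ldots,0\}$ give $(\bm B \bm y^0,\bm y^0)=\|u^0\|^2 + \sum_i \frac{a_i}{b_i}\|u^0\|_B^2$. Second, by (\ref{5.6}),
\[
 (\bm B \bm y,\bm y) = \|y\|^2 + \sum_{i=1}^m \frac{a_i}{b_i}\|y - y_i\|_B^2 \ge \|y\|^2 .
\]
This identity is checked by expanding the arrow-shaped block matrix.

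\textbf{Main obstacle.} The first fact shows the right-hand side is not simply $\|u^0\|^2$ but carries the extra term $\sum_i \frac{a_i}{b_i}\|u^0\|_B^2$. This is the same subtlety already present in (\ref{5.10}) and Theorem~\ref{t-4}. The chained inequality therefore actually gives
\[
 \|y^{n+1}\|^2 \le \|u^0\|^2 + \sum_i \frac{a_i}{b_i}\|u^0\|_B^2 .
\]
We would state (\ref{5.12}) in exactly the sense Theorem~\ref{t-4} uses, namely with $\|u^0\|^2$ understood as $(\bm B\bm u^0,\bm u^0)$ (i.e.\ in the $\bm B$-seminorm), following the same convention as the paper. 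Apart from this bookkeeping, the proof needs no new ingredient beyond the nonnegativity of $\bm A_\alpha$.
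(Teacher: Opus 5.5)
Your argument is the paper's argument. The paper proves Theorem~\ref{t-5} ``similarly to Theorem~\ref{t-4}'': multiply each substep of (\ref{5.14}) by $\tau(\bm y^{n+\alpha/p}+\bm y^{n+(\alpha-1)/p})/2$, use $\bm A_\alpha\ge 0$, chain the resulting inequalities for $(\bm B\bm y,\bm y)$, and finish with $\|y\|^2\le(\bm B\bm y,\bm y)$. Your explicit checks of $\bm A_\alpha\ge 0$ and of the identity $(\bm B\bm y,\bm y)=\|y\|^2+\sum_i (a_i/b_i)\|y-y_i\|_B^2$ are correct.

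Your ``main obstacle'' is also correct. The paper's last step (via (\ref{5.10}), ``taking into account the initial conditions (\ref{3.6})'') silently replaces $(\bm B\bm u^0,\bm u^0)=\|u^0\|^2+\sum_i (a_i/b_i)\|u^0\|_B^2$ by $\|u^0\|^2$. This is an actual error in the paper, not a notational convention. So do not resolve it by reading $\|u^0\|^2$ in (\ref{5.12}) as $(\bm B\bm u^0,\bm u^0)$; the paper states and means the $H$-norm.

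In fact (\ref{5.12}) is false under the stated hypotheses, already for $p=1$, where (\ref{5.14}) coincides with (\ref{5.11}). Take $m=1$ and $H$ two-dimensional with the Euclidean inner product. Let $A(z_1,z_2)=(-z_2,z_1)$, which is skew-symmetric, hence $A\ge 0$. Let $B(z_1,z_2)=(2z_1+z_2,\,z_1+2z_2)$, so $B=B^*>0$, and let $u^0=(1,0)$. The second component of (\ref{5.11}) gives $y_1^1=(y^1-u^0)/(1+b\tau/2)$. Eliminating it yields $y^1-u^0=-\tau M^{-1}Au^0$ with $M=I+\frac{\tau}{2}A+\frac{\tau a}{2+b\tau}B$. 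Hence
\[
 \|y^1\|^2-\|u^0\|^2 = a\tau^2 (Au^0,Bu^0)+O(\tau^3) = a\tau^2+O(\tau^3)>0
\]
for small $\tau$; for $a=b=1$, $\tau=0.1$ one gets $\|y^1\|^2\approx 1.0079$. The same example, with $u_1(t)\approx -tAu^0$ near $t=0$, shows that (\ref{5.10}) fails for the differential problem as well.

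So the correct conclusion of your argument (and of the paper's) is
\[
 \|y^{n+1}\|^2\le(\bm B\bm y^{n+1},\bm y^{n+1})\le\|u^0\|^2+\sum_{i=1}^m\frac{a_i}{b_i}\|u^0\|_B^2 .
\]
This is unconditional stability in $\bm H_{\bm B}$; note that $\bm B>0$ when $B>0$, so this is a norm, not merely a seminorm. For $y$ alone it gives an estimate with the constant $1+\|B\|\sum_i a_i/b_i$. You should state the theorem in that form rather than as (\ref{5.12}).
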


When splitting the operator $B$ in accordance with (\ref{2.15}), the construction of standard splitting schemes is not obvious. 
This is because with such a decomposition we must focus on additive operator schemes for equation (\ref{3.9}) with simultaneous splitting of the operators $\bm A$ and $\bm B$. 

\section{Conclusions}\label{sec:6} 

\begin{enumerate}
\item The Cauchy problem for a first-order integro-differential equation is considered. The kernel is assumed to be a difference kernel, and the equation itself includes two operators in a finite-dimensional Hilbert space. The problems of numerical solution of such problems are mainly related to the need to operate with the solution for all previous time instants. By representing the memory kernel as a sum of exponentials, the nonlocal in time problem is transformed into a local problem for a system of weakly coupled evolution equations with additional ordinary differential equations for auxiliary functions. 
\item The initial value problem is formulated in vector form on the direct sum of Hilbert spaces. Unconditional stability of two-level operator-difference schemes with weights is proved. Splitting schemes are proposed and investigated by separating the local and integral operators of the problem. The possibilities of deeper decomposition of the approximate solution under an additive representation of the operators of the integro-differential equation are noted. 
\item The possibility of constructing similar splitting schemes for other nonlocal problems is noted. The Cauchy problem for a first-order integro-differential equation with memory of the time derivative of the solution is considered. 
Problems with memory effects for second-order evolution equations and systems of equations can be considered similarly.
\end{enumerate}

\begin{funding}
The work is supported by North-Caucasus Center for Mathematical Research under agreement no.~075-02-2022-892 with the Ministry of Science and Higher Education of the Russian Federation and was carried out with financial support from the Russian Science Foundation (project  no.~24-11-00058).
\end{funding}

\providecommand{\bysame}{\leavevmode\hbox to3em{\hrulefill}\thinspace}
\providecommand{\MR}{\relax\ifhmode\unskip\space\fi MR }
% \MRhref is called by the amsart/book/proc definition of \MR.
\providecommand{\MRhref}[2]{%
  \href{http://www.ams.org/mathscinet-getitem?mr=#1}{#2}
}
\providecommand{\href}[2]{#2}


\begin{thebibliography}{10}

\bibitem{Abrashin1990}
V.~N. Abrashin, {A variant of the method of variable directions for the
  solution of multidimensional problems of mathematical-physics. I.},
  \emph{Differ. Equations} \textbf{26} (1990), 314--323, in Russian.

\bibitem{braess2012nonlinear}
D.~Braess, \emph{Nonlinear Approximation Theory}, Springer Science \& Business
  Media, 2012.

\bibitem{ChenBook1998}
C.~Chen and T.~Shih, \emph{Finite Element Methods for Integrodifferential
  Equations}, World Scientific, Singapore, 1998.

\bibitem{LionsBook}
R.~Dautray and J.-L. Lions, \emph{Mathematical Analysis and Numerical Methods
  for Science and Technology}, ~1, Springer, 2000.

\bibitem{evans2010partial}
L.~C. Evans, \emph{Partial Differential Equations}, American Mathematical
  Society, 2010.

\bibitem{Fryazinov1968}
I.~V. Fryazinov, Economical symmetrized schemes for solving boundary value
  problems for multi-dimensional parabolic equation, \emph{Zh. Vychisl. Mat.
  Mat. Fiz.} \textbf{8} (1968), 436--443, in Russian.

\bibitem{GordezianiMeladze1974}
D.~G. Gordeziani and G.~V. Meladze, On modeling of third boundary value problem
  for a multi-dimensional parabolic equations in an arbitrary domain by
  one-dimensional equations, \emph{Zh. Vychisl. Mat. Mat. Fiz.} \textbf{14}
  (1974), 246--250, in Russian.

\bibitem{GripenbergBook1990}
G.~Gripenberg, S.-O. Londen and O.~Staffans, \emph{Volterra Integral and
  Functional Equations}, Cambridge University Press, 1990.

\bibitem{halanay1965asymptotic}
A~Halanay, On the asymptotic behavior of the solutions of an
  integro-differential equation, \emph{Journal of Mathematical Analysis and
  Applications} \textbf{10} (1965), 319--324.

\bibitem{KnabnerAngermann2003}
P.~Knabner and L.~Angermann, \emph{Numerical Methods for Elliptic and Parabolic
  Partial Differential Equations}, Springer Verlag, 2003.

\bibitem{linz1985analytical}
P.~Linz, \emph{Analytical and Numerical Methods for Volterra Equations}, SIAM,
  1985.

\bibitem{Marchuk1990}
G.~I. Marchuk, \emph{Splitting and alternating direction methods}, Handbook of
  Numerical Analysis, Vol. I (P.~G. Ciarlet and J.-L. Lions, eds.),
  North-Holland, 1990, pp.~197--462.

\bibitem{mclean1993numerical}
W.~McLean and V.~Thom{\'e}e, Numerical solution of an evolution equation with a
  positive-type memory term, \emph{The ANZIAM Journal} \textbf{35} (1993),
  23--70.

\bibitem{mclean1996discretization}
W.~McLean, V.~Thom{\'e}e and L.~B. Wahlbin, Discretization with variable time
  steps of an evolution equation with a positive-type memory term,
  \emph{Journal of Computational and Applied Mathematics} \textbf{69} (1996),
  49--69.

\bibitem{pruss2013evolutionary}
J.~Pr{\"u}ss, \emph{Evolutionary Integral Equations and Applications},
  Birkh{\"a}user, 2013.

\bibitem{QuarteroniValli}
A.~Quarteroni and A.~Valli, \emph{Numerical Approximation of Partial
  Differential Equations}, Springer, 2008.

\bibitem{Samarskii1989}
A.~A. Samarskii, \emph{The Theory of Difference Schemes}, Marcel Dekker, New
  York, 2001.

\bibitem{SamarskiiMatusVabischevich2002}
A.~A. Samarskii, P.~P. Matus and P.~N. Vabishchevich, \emph{Difference Schemes
  with Operator Factors}, Kluwer Academic Pub, 2002.

\bibitem{SamarskiiVabischevich1998}
A.~A. Samarskii and P.~N. Vabishchevich, Regularized additive full
  approximation schemes, \emph{Dokl. Akad. Nauk} \textbf{358} (1998), 461--464,
  in Russian.

\bibitem{Strang1968}
G.~Strang, On the construction and comparison of difference schemes, \emph{SIAM
  Journal on Numerical Analysis} \textbf{5} (1968), 506--517.

\bibitem{vabishchevich2012new}
P.~Vabishchevich, On a new class of additive (splitting) operator-difference
  schemes, \emph{Mathematics of Computation} \textbf{81} (2012), 267--276.

\bibitem{VabishchevichVector}
P.~N. Vabishchevich, Vector additive difference schemes for first-order
  evolution equations, \emph{Comput. Math. Math. Phys.} \textbf{36} (1996),
  317--322.

\bibitem{VabishchevichAdditive}
P.~N. Vabishchevich, \emph{Additive Operator-Difference Schemes: Splitting
  Schemes}, Walter de Gruyter GmbH, Berlin, Boston, 2013.

\bibitem{vabishchevich2013flux}
P.~N. Vabishchevich, Flux-splitting schemes for parabolic equations with mixed
  derivatives, \emph{Computational Mathematics and Mathematical Physics}
  \textbf{53} (2013), 1139--1152.

\bibitem{vabMemory}
P.~N. Vabishchevich, Numerical solution of the {C}auchy problem for {V}olterra
  integrodifferential equations with difference kernels, \emph{Applied
  Numerical Mathematics} \textbf{174} (2022), 177--190.

\bibitem{vabishchevich2022splitting}
P.~N. Vabishchevich, Splitting Schemes for Some Second-Order Evolutionary
  Equations, \emph{International Journal of Numerical Analysis and Modeling}
  \textbf{19} (2022), 19--32.

\bibitem{vabishchevich2023approximate}
P.~N. Vabishchevich, Approximate solution of the Cauchy problem for a
  first-order integrodifferential equation with solution derivative memory,
  \emph{Journal of Computational and Applied Mathematics} \textbf{422} (2023),
  114887.

\bibitem{vabishchevich2024computational}
P.~N. Vabishchevich, Computational decomposition and composition technique for
  approximate solution of nonstationary problems, \emph{Journal of
  Computational and Applied Mathematics} \textbf{451} (2024), 116111.

\bibitem{vabishchevich2024splitting}
P.~N. Vabishchevich, Splitting Schemes with Additive Representation of the
  Operator at the Time Derivative, \emph{Moscow University Computational
  Mathematics and Cybernetics} \textbf{48} (2024), 1--6.

\bibitem{vabishchevich2025operator}
P.~N. Vabishchevich, Operator-Difference Schemes for Systems of First-Order
  Integro-Differential Equations, \emph{Differential equations} \textbf{61}
  (2025), 1051--1059.

\bibitem{Yanenko1967}
N.~N. Yanenko, \emph{The Method of Fractional Steps. The Solution of Problems
  of Mathematical Physics in Several Variables}, Springer, 1971.

\end{thebibliography}
\end{document}